
\documentclass[12pt,a4paper]{amsart}
\usepackage{amsmath}
\usepackage{amsfonts}
\usepackage{color}
\font\bb=msbm7 at 11pt


\def\cqfd{
{\hfill
\kern 6pt\penalty 500
\raise -1pt\hbox{\vrule\vbox to 5pt{\hrule width 4pt
\vfill\hrule}\vrule}}
\break}

\def\deg{\mathop{\rm deg}\nolimits}

\def \F {\hbox{\bb F}}

\def\f#1{\hbox{\bb F}_{#1}}

\def \A {\hbox{\bb A}}
\def \P {\hbox{\bb P}}

\long\def\eg#1{{\color{green}#1}}
\long\def\eg#1{}
\def\no{n$^\circ$}

\let\cj=\overline

\newtheorem{theorem}{Theorem}
\newtheorem{proposition}[theorem]{Proposition}
\newtheorem{lemma}[theorem]{Lemma}

\begin{document}

\date{\today}

\title{Differentially 4-uniform functions}
\author{Yves Aubry and Fran\c cois Rodier}
\address{Institut de Math\'ematiques de Toulon, Universit\'e du Sud Toulon-Var, France\\
and \\
Institut de Math\'ematiques de Luminy, Marseille, France}

\email{yves.aubry@univ-tln.fr and rodier@iml.univ-mrs.fr}

\subjclass[2000]{11R29,11R58,11R11,14H05}

\keywords{Boolean functions, almost perfect nonlinear functions, varieties over finite fields.}

\begin{abstract}
We give a geometric characterization of vectorial Boolean functions with differential uniformity $\leq 4$. This enables us to give a necessary condition on the degree of the base field for a function of degree $2^r-1$ to be differentially 4-uniform.
\end{abstract}

\maketitle

\section{Introduction}
We are interested in {vectorial} {Boolean} functions from the $\F_2$-vectorial space ${\F}_2^m$ to itself  in $m$ variables, viewed as polynomial functions $f:{\F}_{2^m} \longrightarrow {\F}_{2^m}$ over the field ${\F}_{2^m}$ in one variable of degree at most $2^{m}-1$. 
For a function  $f:{\F}_{2^m} \longrightarrow {\F}_{2^m}$, we consider, after K.  Nyberg (see \cite{ny}),  {its differential uniformity}
$$\delta(f)=\max_{\alpha\not=0,\beta}\sharp\{x\in{\F}_{2^m} \mid f(x+\alpha)+f(x)=\beta\}.$$
This is  clearly a strictly positive even integer.

Functions $f$ with small $\delta(f)$ have applications in  cryptography {(see \cite{ny})}. Such functions with $\delta(f)=2$ are called almost perfect nonlinear (APN) and have been  extensively studied: see  \cite{ny}  and \cite{ccz} for the genesis of the topic and more recently  \cite{BCCL} and \cite{BCL}  for a synthesis of open problems; see also \cite{bcp} for new constructions and \cite{Vol} for a geometric point of view of differential uniformity.

 Functions with $\delta(f)=4$ are also useful; for example the function $x\longmapsto x^{-1}$, which is used in the AES algorithm over the field $\f{2^8}$,  has differential uniformity 4 on $\f{2^m}$ for any even $m$.
Some results on these functions have been collected by C. Bracken and G. Leander \cite{bl1,bl2}.

We consider here the class of  functions $f$ such that $\delta(f)\leq 4$, called differentially 4-uniform functions. We will show that for polynomial   functions $f$ of degree $d=2^r-1$ such that $\delta(f)\leq 4$ on the field ${\F} _{2^m}$, the number $m$ is bounded by an expression depending on $d$. The second author demonstrated the same bound in the case of APN functions \cite{Ro1, Ro2}.
The principle of the method we apply here was already used by H. Janwa et al. \cite{J-W} to study cyclic codes and by A. Canteaut \cite{ca} to show that certain power functions could not be APN when the exponent is too large.

Henceforth we fix $q=2^m$.

In order to simplify our study of  such functions, let us recall the following elementary results on differential uniformity; the
proofs are straightforward:

\begin{proposition}
\label{reduction}
{\sl (i)} Adding a $q$-affine polynomial (i.e.  a polynomial whose monomials are of degree 0 or a power of 2) to a function $f$ does not change $\delta(f)$.

{\sl (ii)} For all $a$, $b$ and $c$ in $\f q$, such that $a\ne0$ and $c\ne0$ we have
$$\delta(cf(ax+b))=\delta(f).$$

{\sl (iii)} One has $\delta(f^2)=\delta(f).$
 \end{proposition}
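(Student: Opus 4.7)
The plan is to treat the three items separately by direct manipulation of the defining equation $f(x+\alpha)+f(x)=\beta$. In each case I will exhibit an explicit bijection that matches the count of solutions for the perturbed function with that for $f$, so that taking the maximum over pairs $(\alpha,\beta)$ with $\alpha\neq 0$ yields the same value.

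For (i), the essential input is that in characteristic $2$ every Frobenius $x\mapsto x^{2^i}$ is additive, so a $q$-affine polynomial $L$ satisfies $L(x+\alpha)+L(x)=L(\alpha)+L(0)$, a quantity depending only on $\alpha$. Substituting $f+L$ into the equation therefore replaces $\beta$ by $\beta+L(\alpha)+L(0)$; for fixed $\alpha$ this shift is a bijection on $\F_q$, so the supremum over $\beta$ is unchanged and $\delta(f+L)=\delta(f)$.

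For (ii), I would apply the bijective change of variable $y=ax+b$ on $\F_q$, which turns the defining equation for $g(x)=cf(ax+b)$ into $f(y+a\alpha)+f(y)=c^{-1}\beta$. Since $a\neq 0$ preserves the condition $\alpha\neq 0$ and $\beta\mapsto c^{-1}\beta$ is a bijection on $\F_q$, both the $x$-count and the two outer maxima are preserved. For (iii), the characteristic $2$ identity $(u+v)^2=u^2+v^2$ gives $f(x+\alpha)^2+f(x)^2=(f(x+\alpha)+f(x))^2$, so $f^2(x+\alpha)+f^2(x)=\beta$ is equivalent to $f(x+\alpha)+f(x)=\beta^{1/2}$; since squaring is a bijection of $\F_q$ the maxima over $\beta$ coincide.

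I do not expect any genuine obstacle: the three statements reduce to one-line verifications, in line with the authors' remark that ``the proofs are straightforward.'' The only conceptual point used throughout is the bijectivity and additivity of the Frobenius in characteristic $2$, which makes every substitution above reversible.
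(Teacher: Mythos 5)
Your proof is correct: each of the three verifications (the additivity of $q$-affine polynomials giving $L(x+\alpha)+L(x)=L(\alpha)+L(0)$, the affine change of variable $y=ax+b$ together with the bijection $(\alpha,\beta)\mapsto(a\alpha,c^{-1}\beta)$, and the bijectivity of the Frobenius for the squaring case) is sound. The paper omits the proof entirely, noting only that it is straightforward, and your argument is exactly the intended direct verification.
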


Hence, without loss of generality, 
from now on we can assume that  $f$ is a polynomial mapping from $ \f {q} $ to itself which  has neither terms of degree a power of 2 nor a constant term, and which has at least one term of odd degree.

To any function $f:{\F}_{q} \longrightarrow {\F}_{q}$, we associate the polynomial
$$f(x)+f(y)+f(z)+f(x+y+z).$$
Since this polynomial  is clearly divisible by 
$$(x+y)(x+z)(y+z),$$
we can consider the polynomial 
$${P_f}(x,y,z):=\frac{f(x)+f(y)+f(z)+f(x+y+z)}{(x+y)(x+z)(y+z)}$$
which has degree  $\deg(f)-3$ if $\deg(f)$ is not  a power of 2.


\section{A characterization of  functions with $\delta\le4$}\label{characterization}

We will give, as in \cite{Ro1}, a geometric criterion for a  function to have $\delta\le4$.
We consider in this section the algebraic set $X$ defined by the elements $(x,y,z,t)$ in the affine space $\A^4(\overline{\F}_q)$ such that
$${P_f}(x,y,z)={P_f}(x,y,t)=0.$$

We set also
$V$ the hypersurface of the affine space $\A^4(\overline{\F}_q)$ defined by 
\begin{equation}\label{7hyperplans}
(x+y)(x+z)(x+t)(y+z)(y+t)(z+t)(x+y+z+t)=0.
\end{equation}

The hypersurface $V$ is the union of the seven hyperplanes $H_1$, \ldots, $H_7$ defined respectively by the equations $x+y=0$, \ldots, $x+y+z+t=0$.

We begin with a simple lemma:

\begin{lemma}
The following two properties are equivalent:

(i) there exist 6 distinct elements $x_0, x_1, x_2, x_3, x_4, x_5$ in ${\F}_{q}$ such that
$$\begin{cases}
x_0+x_1=\alpha,\ \ f(x_0)+f(x_1)=\beta\\
x_2+x_3=\alpha,\ \ f(x_2)+f(x_3)=\beta\\
x_4+x_5=\alpha,\ \ f(x_4)+f(x_5)=\beta
\end{cases}$$

(ii) there exist 4 distinct elements $x_0, x_1, x_2, x_4$  in ${\F}_{q}$  such that $x_0+x_1+ x_2+ x_4\ne0$
 and such that
$$\begin{cases}
f(x_0)+f(x_1)+f(x_2)+f(x_0+x_1+x_2)=0\\
f(x_0)+f(x_1)+f(x_4)+f(x_0+x_1+x_4)=0.\\
\end{cases}$$
\end{lemma}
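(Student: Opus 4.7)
The plan is to prove the two implications by explicit construction, exploiting the fact that when $x_0+x_1 = x_2+x_3 = \alpha$, the relation $f(x_0)+f(x_1) = f(x_2)+f(x_3) = \beta$ forces $f(x_0)+f(x_1)+f(x_2)+f(x_3) = 0$ (since we are in characteristic $2$), and conversely $x_3$ is determined as $x_0+x_1+x_2$.

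For (i) $\Rightarrow$ (ii), I will take the same $x_0, x_1, x_2, x_4$ and show they satisfy (ii). Since $x_2+x_3 = x_0+x_1$, one has $x_3 = x_0+x_1+x_2$, and adding the corresponding equations $f(x_0)+f(x_1)=\beta$ and $f(x_2)+f(x_3)=\beta$ yields the first equation of (ii). Similarly $x_5 = x_0+x_1+x_4$ yields the second. Distinctness of $x_0,x_1,x_2,x_4$ is inherited, and $x_0+x_1+x_2+x_4\ne 0$ follows from $x_5\ne x_2$, since $x_5 = x_0+x_1+x_4$.

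For (ii) $\Rightarrow$ (i), I will set $\alpha := x_0+x_1$, $\beta := f(x_0)+f(x_1)$, and define $x_3 := x_0+x_1+x_2$ and $x_5 := x_0+x_1+x_4$; the equations of (ii) then give exactly $f(x_2)+f(x_3)=\beta$ and $f(x_4)+f(x_5)=\beta$, with the sum conditions on the $x_i$ holding by construction. The main (and only slightly tedious) step is to verify that the six elements $x_0,\dots,x_5$ are pairwise distinct: each equality among them reduces, after substituting the definitions of $x_3$ and $x_5$, either to the distinctness of $x_0,x_1,x_2,x_4$ or to the condition $x_0+x_1+x_2+x_4\ne 0$. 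For instance $x_2 = x_5$ is equivalent to $x_0+x_1+x_2+x_4=0$, and $x_3 = x_4$ to the same relation; all other cases collapse to a distinctness hypothesis on the quadruple.

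The main obstacle is really only this bookkeeping of distinctness: there are $\binom{6}{2}=15$ pairs to check, but they fall into a handful of types and the construction was designed precisely so that the hypothesis $x_0+x_1+x_2+x_4\ne 0$ rules out the two nontrivial coincidences. Once this is organized cleanly, the equivalence is essentially a change of variables between the triple-pair description of (i) and the two-equation description of (ii).
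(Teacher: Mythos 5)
Your proposal is correct and follows essentially the same route as the paper: both directions are handled by the same change of variables ($x_3=x_0+x_1+x_2$, $x_5=x_0+x_1+x_4$), with the condition $x_0+x_1+x_2+x_4\ne 0$ ruling out exactly the coincidences $x_3=x_4$ and $x_2=x_5$. Your systematic accounting of the fifteen pairs is, if anything, slightly more complete than the paper's own verification.
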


\begin{proof}
Suppose that (i) is true.
Then we have
$x_0+x_1+x_2=\alpha+x_2=x_3$ and so
$f(x_0)+f(x_1)+f(x_2)+f(x_0+x_1+x_2)=f(x_0)+f(x_1)+f(x_2)+f(x_3)=0.$
The second equation holds true in the same way. Finally, we have $x_0+x_1+ x_2+ x_4=x_3+x_4\ne0.$

Conversely, let us set
  $\alpha=x_0+x_1$, $\beta=f(x_0)+f(x_1)$ and $x_3=\alpha+x_2=x_0+x_1+x_2$.
Then
$f(x_2)+f(x_3)=f(x_2)+f(x_0+x_1+x_2)=f(x_0)+f(x_1)=\beta$.
Furthermore, we have $x_3\ne x_0$ because $x_1\ne x_2$ and we have
$x_3\ne x_1$ since otherwise we would have  $x_2=\alpha+x_3=\alpha+x_1=x_0$.

Setting
 $x_5=\alpha+x_4=x_0+x_1+x_4$
we have
$f(x_4)+f(x_5)=f(x_4)+f(x_0+x_1+x_4)=f(x_0)+f(x_1)=\beta$.
We have $x_3\ne x_4$ since otherwise we would have $0=x_3+ x_4=x_0+x_1+ x_2+ x_4$ which is not the case by hypothesis.

Finally $x_3\ne x_5$ since otherwise we would have $ x_2= x_4$, and so
all the six elements $x_0, x_1, x_2, x_3, x_4, x_5$  are different.
\end{proof}

We can now state a geometric characterization of differentially 4-uniform functions:

\begin{theorem}\label{a2pn}
The differential uniformity of a function $f:{\F}_{q} \longrightarrow {\F}_{q}$  is not larger than 4 if and only if:
$$X({\F}_q)\subset V$$
where $X({\F}_q)$ denotes the set of rational points over ${\F}_q$ of $X$.
\end{theorem}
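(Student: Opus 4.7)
The plan is to deduce the theorem from the preceding lemma by matching each of its two equivalent conditions to one side of the stated characterization. The work then splits into two independent translations, one for the analytic side $\delta(f)\le 4$ and one for the geometric side $X(\F_q)\subset V$.

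First I would argue that $\delta(f)\le 4$ is exactly the negation of condition (i) of the lemma. The equation $f(x+\alpha)+f(x)=\beta$ is invariant under $x\mapsto x+\alpha$, so its solutions pair up; hence $\delta(f)$ is automatically even and $\delta(f)>4$ is the same as $\delta(f)\ge 6$. The latter means that some choice of $\alpha\ne 0$ and $\beta$ admits at least six distinct solutions $x_0,\dots,x_5$; regrouping them into the three pairs $\{x_0,x_1\},\{x_2,x_3\},\{x_4,x_5\}$, each of common sum $\alpha$ and common $f$-sum $\beta$, is precisely the assertion of (i). Thus $\delta(f)\le 4$ holds if and only if (i) fails, and by the lemma if and only if (ii) fails.

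Next I would identify the negation of (ii) with $X(\F_q)\subset V$. Setting $(x,y,z,t)=(x_0,x_1,x_2,x_4)$, the six pairwise-distinctness conditions on $x_0,x_1,x_2,x_4$ become, in characteristic $2$, the non-vanishing of the six linear forms $x+y,\ x+z,\ x+t,\ y+z,\ y+t,\ z+t$; together with the additional inequality $x_0+x_1+x_2+x_4\ne 0$, i.e.\ $x+y+z+t\ne 0$, this says exactly that $(x,y,z,t)\notin V$. Under these conditions the denominators $(x+y)(x+z)(y+z)$ and $(x+y)(x+t)(y+t)$ appearing in $P_f(x,y,z)$ and $P_f(x,y,t)$ are both non-zero, so the equations $P_f(x,y,z)=0$ and $P_f(x,y,t)=0$ defining $X$ reduce respectively to the two equations displayed in (ii). Hence (ii) is equivalent to the existence of a point of $X(\F_q)$ outside $V$, i.e.\ to $X(\F_q)\not\subset V$. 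Chaining the two equivalences yields the theorem.

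The only step requiring a modicum of care is the bookkeeping in the second paragraph: verifying that the seven hyperplanes composing $V$ correspond bijectively to the seven non-degeneracy conditions in (ii), and that dividing the polynomial identity $(x+y)(x+z)(y+z)\cdot P_f(x,y,z)=f(x)+f(y)+f(z)+f(x+y+z)$ by its non-vanishing denominator legitimately converts the vanishing of $P_f$ into the vanishing of the numerator at the relevant points. I do not expect any serious obstacle, since the genuinely combinatorial content of the theorem has already been packaged into the preceding lemma.
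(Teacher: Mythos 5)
Your proposal is correct and follows essentially the same route as the paper: reformulate $\delta(f)\le 4$ as the failure of condition (i), invoke the lemma to pass to condition (ii), and translate the negation of (ii) into the containment $X(\F_q)\subset V$ via the identification of the seven non-degeneracy conditions with the seven hyperplanes of $V$. The only difference is that you spell out two details the paper leaves implicit (the pairing of solutions under $x\mapsto x+\alpha$, and the non-vanishing of the denominators off $V$), which is harmless.
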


\begin{proof}
The differential uniformity is not larger than 4 if and only if for any $\alpha\in{\F}_{q}^{\ast}$ and any $\beta\in{\F}_{q}$, the equation 
$$f(x+\alpha)+f(x)=\beta$$
has at most 4 solutions, that is to say
$$\sharp\{x\in{\F}_{q} \vert f(x)+f(y)=\beta,\ \  x+y=\alpha\}\leq 4.$$
But this is equivalent to saying that we cannot find 6 distinct elements $x_0, x_1, x_2, x_3, x_4, x_5$ in ${\F}_{q}$ such that
$$\begin{cases}
x_0+x_1=\alpha,\ \ f(x_0)+f(x_1)=\beta\\
x_2+x_3=\alpha,\ \ f(x_2)+f(x_3)=\beta\\
x_4+x_5=\alpha,\ \ f(x_4)+f(x_5)=\beta.
\end{cases}$$
By the previous lemma, this is equivalent  to saying that we cannot find 4 distinct elements $x_0, x_1, x_2, x_4$ in ${\F}_{q}$
 such that $x_0+x_1+ x_2+ x_4\ne0$ and such that
$$\begin{cases}
f(x_0)+f(x_1)+f(x_2)+f(x_0+x_1+x_2)=0\\
f(x_0)+f(x_1)+f(x_4)+f(x_0+x_1+x_4)=0.\\
\end{cases}$$
But this  can be reformulated by saying that the rational points over $\F_q$ of the variety $X$  are contained in the variety $V$, that is to say
$X({\F}_{q})\subset V$.

\end{proof}


\section{Monomial  functions with $\delta\le4$}\label{monomial}

If the function $f$ is a monomial of degree $d>3$:
$$f(x)=x^d$$
then the polynomials ${P_f}(x,y,z)$ and ${P_f}(x,y,t)$ are homogeneous polynomials and we can consider 
the intersection $X$ of the projective cones $S_1$ and $S_2$ of dimension 2 
defined respectively by
${P_f}(x,y,z)=0$ and ${P_f}(x,y,t)=0$ with projective coordinates $(x:y:z:t)$ in the projective space $\P^3(\overline{\F}_{q})$.

Even if  $X$ is now a projective algebraic subset of the projective space $\P^3(\overline{\F}_{q})$, 
Theorem \ref{a2pn} tells us also that:
$$\delta(f)\leq 4 \ \ {\rm if}\ {\rm and}\ {\rm only}\ {\rm if}\ \ X({\F}_q)\subset V,$$
where $V$ is the hypersurface of $\P^3(\overline{\F}_{q})$ defined by Equation (\ref{7hyperplans}).

Indeed,  the algebraic sets $X$ and $V$ in this section are closely related to  but not equal to the sets $X$ and $V$ of the previous section. The set $X$ of this section (resp. $V$) is the set of lines through the origin of the set $X$ (resp. $V$) of the previous section which is invariant under homotheties with center the origin.
For convenience, we keep the same notations.
\begin{lemma}
The projective algebraic set $X$ has dimension 1, i.e.  it is a projective curve.
\end{lemma}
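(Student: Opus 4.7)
The plan is to show that the two cones $S_1:\{P_f(x,y,z)=0\}$ and $S_2:\{P_f(x,y,t)=0\}$ in $\P^3$ share no common irreducible component. Each is a hypersurface of dimension $2$, so by the standard intersection-dimension bound in $\P^3$, the intersection $X=S_1\cap S_2$ will then have pure dimension $2+2-3=1$.

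First I would reduce the non-common-component question to a coprimality statement on the defining polynomials. A shared component would give $P_f(x,y,z)$ and $P_f(x,y,t)$ a nonconstant common factor $Q\in\overline{\F}_q[x,y,z,t]$. Since $P_f(x,y,z)$ is free of $t$ and $P_f(x,y,t)$ is free of $z$, such a $Q$ must be free of both $z$ and $t$, so $Q=Q(x,y)$. It therefore suffices to show that $P_f(x,y,z)$ has no nonconstant factor in $\overline{\F}_q[x,y]$.

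The main calculation is to view $P_f(x,y,z)$ as a polynomial in $z$ over $\overline{\F}_q[x,y]$ and read off its $z$-leading coefficient; any factor $Q(x,y)$ would have to divide every $z$-coefficient, in particular the leading one. In the numerator $N=x^d+y^d+z^d+(x+y+z)^d$, the $z^d$ terms cancel in characteristic $2$; for $d$ odd the surviving top term is $\binom{d}{1}(x+y)z^{d-1}=(x+y)z^{d-1}$, so $\deg_z N=d-1$ with $z$-leading coefficient $(x+y)$. The denominator factors as $(x+y)(x+z)(y+z)=(x+y)[z^2+(x+y)z+xy]$, of $z$-degree $2$ and $z$-leading coefficient $(x+y)$. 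Exact division then shows $P_f$ has $z$-degree $d-3$ and $z$-leading coefficient $1$, so no nontrivial $Q(x,y)$ can divide it.

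The main obstacle is this leading-coefficient step, which genuinely uses that $d$ is odd (so $\binom{d}{1}\equiv 1\pmod 2$); this is precisely the range of interest since the paper's main theorem concerns $d=2^r-1$. If instead $d=2^a m$ with $a\geq 1$ and $m$ odd, a direct manipulation shows $P_f=[(x+y)(x+z)(y+z)]^{2^a-1}\,P_g(x,y,z)^{2^a}$ for $g(x)=x^m$, producing a genuine factor $(x+y)^{2^a-1}\in\overline{\F}_q[x,y]$ that obstructs the coprimality argument; in that regime one would first need a separate reduction (for instance via Proposition \ref{reduction}) to the odd case before the argument above applies.
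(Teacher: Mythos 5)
Your proof is correct and it reaches the same decisive computation as the paper by a different logical route. The paper exploits the cone structure: every irreducible component of the cone $S_2$ passes through its vertex $(0:0:1:0)$, so a common component of $S_1$ and $S_2$ would force $P_f(0,0,1)=0$, and the authors compute $P_f(0,0,1)=1$ by essentially the expansion you use (the coefficient of $z^{d-1}$ in the numerator being $\binom{d}{1}(x+y)=(x+y)$ for $d$ odd). You instead argue via unique factorization: a common factor of $P_f(x,y,z)$ and $P_f(x,y,t)$ must lie in $\overline{\F}_q[x,y]$, and $P_f$ is monic of degree $d-3$ in $z$, so no such factor exists. Since $P_f$ is homogeneous of degree $d-3$, its leading $z$-coefficient is exactly the scalar $P_f(0,0,1)$, so the two key computations coincide; what differs is the reduction to that computation. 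Your version avoids the (unstated) fact that every component of a cone contains the vertex, and it is close in spirit to the argument the authors themselves use for the cylinder case in Section 4, so it would let the two lemmas be treated uniformly; the paper's version is geometrically slicker but specific to the homogeneous situation. Your closing caveat about even $d$ is apposite rather than a gap: both proofs genuinely need $d$ odd, which is guaranteed by the normalization following Proposition \ref{reduction} and by the hypothesis $d=2^r-1$ in the theorem where this lemma is applied.
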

\begin{proof}
We have to show that the projective surfaces $S_1$ and $S_2$ do not have  common irreducible components. Since $S_1$ and $S_2$ are two cones, it is enough to prove that the vertex of one of the cones doesn't lie in the other cone. The coordinates of the vertex of the cone $S_2$ is $(0:0:1:0)$. To show that it doesn't lie in $S_1$, 
we will prove that $P_f(0:0:1:0)\ne0$.
 Indeed, $S_1$ is defined by the polynomial
$${P_f}(x,y,z)=\frac{x^d+y^d+z^d+(x+y+z)^d}{(x+y)(x+z)(y+z)}\cdot$$
Setting $x+y=u$, we obtain:

$${P_f}(x,y,z)=\frac{x^{d}+(x+u)^d+z^{d}+(u+z)^d}{u(x+z)(x+u+z)},$$
which gives
$${P_f}(x,y,z)=\frac{x^{d-1}+z^{d-1}+uQ(x,z)}{(x+z)(x+u+z)},$$
where $Q$ is some polynomial in $x$ and $z$.
This expression takes the value $1$ at the point $(0:0:1:0)$.
\end{proof}

Now we know that $X$ is a projective curve in $\P^3(\overline{\F}_{q})$, and in order to estimate its number of rational points over ${\F}_{q}$, we must determine its irreducibility. We will prove that the curve
$C_7$, defined as the intersection of $S_2$ with the projective plane $H_7$ of equation 
$x+y+z+t=0$, is an absolutely irreducible component of $X$, and hence that $X$ is reducible.

\begin{proposition}
\label{intercourbe}
The intersection of the curve $X$ with the plane $H_7$ with the equation $x+y+z+t=0$ is equal to the curve  $C_7:=S_2\cap H_7$.
\end{proposition}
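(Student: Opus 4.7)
The plan is to show that the two cones $S_1$ and $S_2$ cut out the same subvariety of the plane $H_7$, so that intersecting either with $H_7$ gives the curve $C_7$. Since $X = S_1 \cap S_2$ by definition, this yields
\[
X \cap H_7 = S_1 \cap S_2 \cap H_7 = S_2 \cap H_7 = C_7.
\]

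To establish $S_1 \cap H_7 = S_2 \cap H_7$, I would work with the defining equation of $H_7$, namely $x+y+z+t=0$, which in characteristic $2$ says $t = x+y+z$. The key observation is that under this substitution, the rational function $P_f(x,y,t)$ reduces to $P_f(x,y,z)$. Indeed, in the numerator one has $x+y+t = z$, so
\[
f(x)+f(y)+f(t)+f(x+y+t) = f(x)+f(y)+f(z)+f(x+y+z),
\]
which is exactly the numerator of $P_f(x,y,z)$. In the denominator one computes $x+t = y+z$ and $y+t = x+z$, so
\[
(x+y)(x+t)(y+t) = (x+y)(y+z)(x+z),
\]
again matching the denominator of $P_f(x,y,z)$. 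Hence $P_f(x,y,t)$ and $P_f(x,y,z)$ agree identically on $H_7$, so the equations defining $S_1$ and $S_2$ cut the same hypersurface of $H_7$.

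There is no real obstacle here: the argument is a direct substitution exploiting the symmetry of the polynomial $f(x)+f(y)+f(z)+f(x+y+z)$ in its four arguments $x,y,z,x+y+z$. The only small point to keep in mind is that $P_f$ is a well-defined polynomial (not merely a rational function) of degree $\deg(f)-3$, so the equality on $H_7$ holds as polynomials and not just generically, which is what is needed to conclude $S_1 \cap H_7 = S_2 \cap H_7$ scheme-theoretically, and in particular as algebraic subsets.
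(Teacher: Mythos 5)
Your proof is correct and follows essentially the same route as the paper: substitute $t=x+y+z$ on $H_7$, observe that the numerator and denominator of $P_f(x,y,t)$ transform into those of $P_f(x,y,z)$, and conclude that $S_2\cap H_7\subset S_1$ (the paper phrases it as $C_7\subset S_1$, which already suffices since $X\cap H_7=S_1\cap C_7$). The only cosmetic difference is that you prove the symmetric statement $S_1\cap H_7=S_2\cap H_7$, but the computation is identical.
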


\begin{proof}
Since $X=S_1\cap S_2$, it is enough to prove that $C_7\subset S_1$.
Since $t=x+y+z$ the points of intersection of the cone $S_2$ with the plane $x+y+z+t=0$ satisfy:
\begin{eqnarray*}
0={P_f}(x,y,t)
&=&\frac{x^d+y^d+t^d+(x+y+t)^d}{(x+y)(x+t)(y+t)}\\
&=&\frac{x^d+y^d+(x+y+z)^d+z^d}{(x+y)(y+z)(x+z)}\\
&=&{P_f}(x,y,z),
\end{eqnarray*}
so they belong to $S_1$.
\end{proof}

\begin{proposition}
\label{isocourbe}
The projective plane curve $C_7$ is isomorphic to the projective plane curve $C$ with equation
$${P_f}(x,y,z)=\frac{x^d+y^d+z^d+(x+y+z)^d}{(x+y)(x+z)(y+z)}=0.$$
\end{proposition}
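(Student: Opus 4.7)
The plan is to exhibit an explicit isomorphism coming from the obvious linear identification of the plane $H_7\subset\P^3$ with $\P^2$. Since $H_7$ has equation $x+y+z+t=0$, i.e.\ $t=x+y+z$ in characteristic $2$, the projection
$$\pi:H_7\longrightarrow\P^2,\quad (x:y:z:t)\longmapsto(x:y:z)$$
is an isomorphism of projective planes, with inverse $(x:y:z)\mapsto(x:y:z:x+y+z)$. One checks this map is well defined on $H_7$ because if $(x,y,z)=(0,0,0)$ and $x+y+z+t=0$, then $t=0$ also, which is excluded.

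Next I would verify that $\pi$ restricts to a bijection between $C_7$ and $C$. The key input is precisely the polynomial identity already established in the proof of Proposition~\ref{intercourbe}: substituting $t=x+y+z$ (so that $x+y+t=z$ and the denominators match after permutation) yields
$${P_f}(x,y,t)={P_f}(x,y,z).$$
Thus a point $(x:y:z:t)\in H_7$ satisfies $P_f(x,y,t)=0$ if and only if its image $(x:y:z)$ satisfies $P_f(x,y,z)=0$. Equivalently, $\pi(C_7)\subset C$ and $\pi^{-1}(C)\subset C_7$.

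Since $\pi$ and $\pi^{-1}$ are mutually inverse morphisms on the ambient planes, their restrictions give the required isomorphism between $C_7$ and $C$. I do not foresee a real obstacle: the substantive content is the identity $P_f(x,y,x+y+z)=P_f(x,y,z)$ already proved, and everything else is a linear change of coordinates.
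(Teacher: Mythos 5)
Your proof is correct and is essentially the paper's own argument: the map $(x:y:z:t)\mapsto(x:y:z)$ you write down is exactly the projection from the vertex $(0:0:0:1)$ of the cone $S_1$ onto the plane $t=0$ that the paper uses, and both arguments rest on the identity $P_f(x,y,x+y+z)=P_f(x,y,z)$ from the proof of Proposition~\ref{intercourbe}. You simply make the inverse map and the verification more explicit than the paper does.
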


\begin{proof}
The projection from the vertex of the cone $S_1$ defines an isomorphism of the projective plane $H_7$ with equation
$x+y+z+t=0$ onto the plane  with equation $t=0$, and it maps $C_7$ onto the curve $C$ with equation
${P_f}(x,y,z)=0.$
\end{proof}

\begin{proposition}\label{courbeinterV}
Let $\mathcal C$ be a plane curve of degree $\deg({\mathcal C})$ and which is not contained in $V$.
Then:
$$\sharp ({\mathcal C}\cap V)({\F}_q)\leq 7\deg({\mathcal C}).$$
\end{proposition}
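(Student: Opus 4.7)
The plan is a direct application of Bézout's theorem inside the plane that contains $\mathcal C$. Write $\Pi\subset\P^3(\overline{\F}_q)$ for this plane. First, I would observe that $\Pi$ cannot coincide with any of the seven hyperplanes $H_1,\ldots,H_7$ making up $V$: otherwise $\mathcal C\subset\Pi=H_i\subset V$, contradicting the hypothesis $\mathcal C\not\subset V$. Consequently, for each $i\in\{1,\ldots,7\}$ the intersection $L_i:=\Pi\cap H_i$ is a genuine line in $\Pi$, and the trace $V\cap\Pi=L_1\cup\cdots\cup L_7$ is a plane curve of degree $7$ in $\Pi$ (as a divisor).

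With this setup, $\mathcal C\cap V$ is the intersection, inside the plane $\Pi$, of two plane curves of respective degrees $\deg(\mathcal C)$ and $7$. Provided they share no irreducible component, Bézout's theorem in $\Pi$ yields
$$\sharp(\mathcal C\cap V)(\overline{\F}_q)\le 7\deg(\mathcal C),$$
and restricting to $\F_q$-rational points gives the claim. Perhaps the cleanest way to write this up is hyperplane by hyperplane: bound $\sharp(\mathcal C\cap V)(\F_q)\le\sum_{i=1}^{7}\sharp(\mathcal C\cap H_i)(\F_q)$, and for each $i$ apply the plane Bézout estimate $\sharp(\mathcal C\cap L_i)(\overline{\F}_q)\le\deg(\mathcal C)$ for the curve $\mathcal C$ and the line $L_i$, which is valid as long as $\mathcal C\not\subset H_i$.

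The main obstacle is justifying the proper-intersection hypothesis, i.e.\ that none of the lines $L_i$ is an irreducible component of $\mathcal C$. A shared component would have to lie inside $\Pi\cap V$ and hence be one of the $L_i$, and in particular would be contained in $V$. I would handle this componentwise: decomposing $\mathcal C=\bigcup_j C_j$ into its irreducible components, the hypothesis $\mathcal C\not\subset V$ is used to ensure that each $C_j$ is \emph{not} contained in $V$ (equivalently, in any $H_i$), so that Bézout applies cleanly to the pair $(C_j,H_i)$ for every $i,j$. Summing the resulting bounds $\deg(C_j)$ over the seven hyperplanes and over the components gives $7\deg(\mathcal C)$. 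Beyond this piece of bookkeeping, there is nothing deeper than the standard Bézout estimate.
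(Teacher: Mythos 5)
Your argument is the same as the paper's: write $V$ as the union of the seven planes $H_1,\ldots,H_7$, bound $\sharp(\mathcal C\cap H_i)({\F}_q)$ by $\deg(\mathcal C)$ for each $i$ via the plane B\'ezout estimate, and sum to get $7\deg(\mathcal C)$. The paper's own proof is exactly this, stated in two lines, so in essence you have reproduced it.

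The one place where you go beyond the paper is also the one place where you slip. You correctly isolate the delicate point --- the estimate $\sharp(\mathcal C\cap H_i)\le\deg(\mathcal C)$ requires that no irreducible component of $\mathcal C$ lie inside $H_i$ --- but you then assert that the hypothesis $\mathcal C\not\subset V$ ``ensures that each $C_j$ is not contained in $V$.'' That inference is false: a reducible curve can fail to be contained in $V$ while still having one component inside some $H_i$. For instance, take $\mathcal C=L\cup C''$ with $L=\Pi\cap H_1$ a line and $C''$ a conic not contained in $V$; then $\mathcal C\not\subset V$, yet $\mathcal C\cap V$ contains the $q+1$ rational points of $L$, which exceeds $7\deg(\mathcal C)=21$ for $q\ge 21$. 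So the proposition as literally stated needs either the stronger hypothesis that no component of $\mathcal C$ lies in $V$, or the assumption that $\mathcal C$ is irreducible. This looseness is already present in the paper's two-line proof and is harmless there, because the proposition is only ever applied to the absolutely irreducible curve $C'$, for which ``not contained in $V$'' and ``no component contained in $V$'' coincide; but since you explicitly raised the reducible case, you should resolve it by adding one of these hypotheses rather than by the invalid implication.
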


\begin{proof}
The variety $V$ is the union of seven projective planes. Each plane cannot contain more than $\deg({\mathcal C})$ points, therefore $V$ contains at most $7\deg({\mathcal C})$ rational points in $\mathcal C$.
\end{proof}

In order to get a lower bound for the number of rational points over $\F_q$ on the curve $C$, hence on the curve $X$, we need to know if $C$ is absolutely irreducible or not. This question has been discussed by H. Janwa, G. McGuire and R. M. Wilson in \cite{J-M-W} and  very recently by F. Hernando and G. McGuire in \cite{H-M}.

\begin{proposition}\label{component}
If $d=2^{r}-1$ with $r\geq 3$, then the projective curve $X$ has an absolutely irreducible component $C'$ defined over $\F_2$ in the plane $x+z+t=0$ and this component $C'$ is isomorphic to the curve $C$.
\end{proposition}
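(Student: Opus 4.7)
\begin{sketch}
I plan to mimic Proposition~\ref{intercourbe}: exhibit $C'$ as the intersection of $X$ with the plane $H:x+z+t=0$, and show that on this plane the two defining equations of $X$ collapse to the single equation $P_f(x,y,z)=0$. Substituting $t=x+z$, one has (in characteristic~$2$) $x+t=z$, $y+t=x+y+z$, and $x+y+t=y+z$, so that
$$P_f(x,y,t)\big|_H \;=\; \frac{x^d+y^d+(x+z)^d+(y+z)^d}{(x+y)\cdot z\cdot(x+y+z)}.$$

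The main step is to verify that this equals $P_f(x,y,z)$ as polynomials in $x,y,z$; equivalently, after cross-multiplying by the two denominators, to establish the polynomial identity
$$(x+z)\bigl[x^d(x+y+z)+y^d z+(x+z)^d(x+y)\bigr]\;=\;z\bigl[x^d(y+z)+y^d(x+z)+z^d(x+y)\bigr].$$
This is where the hypothesis $d=2^r-1$ enters: the Frobenius identity $(x+z)^{d+1}=(x+z)^{2^r}=x^{d+1}+z^{d+1}$ makes the highest-degree monomials on the left-hand side cancel in pairs, and a direct expansion shows that the surviving monomials match those on the right-hand side. This algebraic identity is the only non-routine step, and closely parallels the calculation underlying Proposition~\ref{intercourbe}; it is the main obstacle, though it is elementary once the Frobenius identity is invoked.

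Granted the identity, $X\cap H$ is cut out inside $H$ by the single equation $P_f(x,y,z)=0$. The projection $(x:y:z:t)\mapsto(x:y:z)$ realises $H$ as $\P^2$ and identifies $C':=X\cap H$ with the plane curve $C$. Since the plane $H$ and the polynomial $P_f$ have coefficients in $\F_2$, so does $C'$. By the results of \cite{J-M-W} and \cite{H-M} recalled just before the statement, $C$ is absolutely irreducible whenever $d=2^r-1$ with $r\ge 3$; hence $C'$ is absolutely irreducible, and being a one-dimensional subvariety of the one-dimensional curve $X$, it is an irreducible component of $X$.
\end{sketch}
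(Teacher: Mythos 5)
Your proof is correct and takes essentially the same route as the paper: the authors simply assert that ``one checks'' that $S_1$ and $S_2$ cut out the same curve in the plane $x+z+t=0$, then invoke the projection argument of Proposition~\ref{isocourbe} and the irreducibility criterion of \cite{J-M-W} ($d=2^r-1\equiv 3\pmod 4$), exactly as you do. Your explicit verification of the polynomial identity via the Frobenius relation $(x+z)^{2^r}=x^{2^r}+z^{2^r}$ is a valid (and welcome) expansion of that unproved ``one checks'' step.
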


\begin{proof}
One checks that the intersection of the cone $S_1$ with the plane $x+z+t=0$ is the same as  the intersection of the cone $S_2$ with that plane. Hence one can show, as in Proposition  \ref{isocourbe}, that the  intersection of the curve $X$ with  the plane $x+z+t=0$ is isomorphic to the curve $C$.
Furthermore, it is proved in \cite{J-M-W} that  the curve $C$ is absolutely irreducible since, $\deg(C)=2^r-1\equiv 3\pmod 4$.
\end{proof}

\noindent
Hence we can state

\begin{theorem}
\label{borne1}
Consider the function $f:{\F}_{q} \longrightarrow {\F}_{q}$ defined by $f(x)=x^d$ with $d=2^r-1$ and $r\ge3$. 
If  $5\le d<q^{1/4}+4.6$~, then $f$ has differential uniformity strictly greater than 4. 
\end{theorem}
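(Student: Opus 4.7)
By the contrapositive of Theorem \ref{a2pn}, showing $\delta(f) > 4$ reduces to producing an $\F_q$-rational point of $X$ lying outside the hypersurface $V$. Proposition \ref{component} gives us an absolutely irreducible component $C'$ of $X$, defined over $\F_2$, isomorphic to the plane curve $C$ of degree $d - 3$, so it suffices to find such a point on $C'$; in other words, to establish the strict inequality $\sharp C'(\F_q) > \sharp(C'(\F_q) \cap V)$.

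For the upper bound, I apply Proposition \ref{courbeinterV} to $C'$, viewed as a curve of degree $d - 3$ in its ambient plane $x + z + t = 0$. To invoke this legitimately, one must first check that $C' \not\subset V$; but each of the seven hyperplanes composing $V$ meets the plane $x + z + t = 0$ in a line in $\P^3$, and no line can contain an absolutely irreducible curve of degree $d - 3 \geq 4$, so $C'$ is not contained in any hyperplane of $V$. Therefore $\sharp(C'(\F_q) \cap V) \leq 7(d - 3)$.

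For the lower bound, I apply the Aubry--Perret version of the Weil inequality to the absolutely irreducible projective curve $C' \cong C$. Since $C$ is a plane curve of degree $d - 3$, its arithmetic genus satisfies $\pi_a \leq (d-4)(d-5)/2$, which yields
\[
\sharp C'(\F_q) \geq q + 1 - (d-4)(d-5)\sqrt{q}.
\]
Combining the two bounds, it suffices to verify that the hypothesis $d < q^{1/4} + 4.6$ implies $q + 1 - (d-4)(d-5)\sqrt{q} > 7(d-3)$. Viewed as a quadratic inequality in $\sqrt{q}$, the positive root is of the form $\sqrt{q} \sim (d-4)(d-5) + O(1/d)$, equivalently $q^{1/4} \sim d - 4.5$ asymptotically, so the slight extra slack in the constant $4.6$ absorbs the lower-order corrections. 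Once this numerical inequality holds, $C'(\F_q)$ contains a point outside $V$, and Theorem \ref{a2pn} forces $\delta(f) > 4$.

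The main obstacle is this last arithmetic step: the careful bookkeeping needed to confirm that $d < q^{1/4} + 4.6$ indeed makes the quadratic in $\sqrt{q}$ strictly positive for every admissible $d = 2^r - 1 \geq 7$. The geometric ingredients are already assembled in the preceding propositions, so what remains is a purely elementary estimation exercise.
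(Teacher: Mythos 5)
Your argument is essentially the paper's own proof: you use the absolutely irreducible component $C'\cong C$ from Proposition \ref{component}, bound $\sharp C'(\F_q)$ from below by $q+1-2\pi_{C'}q^{1/2}$ with $\pi_{C'}=(d-4)(d-5)/2$ via the Aubry--Perret inequality, bound the points of $C'$ on $V$ above by $7(d-3)$ via Proposition \ref{courbeinterV} (your explicit check that $C'\not\subset V$ is a welcome detail the paper leaves implicit), and conclude through Theorem \ref{a2pn}. The one piece you defer --- solving the quadratic in $\sqrt q$ to get the explicit threshold $q\ge d^4-18d^3+121d^2-348d+362$ and hence the constant $4.6$ --- is precisely what the paper computes, and it should be done explicitly rather than by the asymptotic $q^{1/4}\sim d-4.5$, since for the smallest admissible $d$ (e.g.\ $d=7$, where the exact root gives $\sqrt q>9$) the lower-order terms are not negligible.
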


\medskip

\begin{proof}

The curve $C'$ is an absolutely irreducible plane curve of arithmetic genus  $\pi_{C'}=(d-4)(d-5)/2$.
According to \cite{A-P-1} (see also \cite{A-P-2} for a more general statement), the number of rational points of the (possibly singular) absolutely irreducible curve $C'$ satisfies
  $$| \#C'({\F}_q)-(q+1)| \leq 2\pi_{C'} q^{1/2}.$$
      Hence
         $$ \#C'({\F}_{q})\ge q+1- 2\pi_{C'} q^{1/2}.$$

      The maximum number of rational points on the curve $C'$ on the surface $V$ is $7(d-3)$ by Proposition \ref{courbeinterV}. If
        $q+1- 2\pi_C q^{1/2}> 7(d-3)$, then
        $C'({\F}_{q})\not\subset V$, therefore $X({\F}_{q})\not\subset V$, and
          $\delta(f)>4$   by Theorem \ref{a2pn}.
But this condition is equivalent to
$$ q- 2\pi_{C'} q^{1/2} - 7(d-3)+1> 0.$$
 The condition is satisfied when  
$$q^{1/2}>\pi_{C'} + \sqrt{7(d-3)-1 + \pi_{C'}^2}$$
hence when 
$$q\ge d^4-18 d^3+121 d^2-348 d+362$$
or
$$5\le d<q^{1/4}+4.6.$$

\end{proof}


\section{Polynomials  functions with $\delta\le4$}

If the function $f$ is a polynomial of one variable with coefficients in ${\F}_q$ of degree $d>3$, we consider again as in section \ref{monomial}
the intersection $X$ of  $S_1$ and $S_2$, which are now cylinders in the affine space $\A^4(\overline{\F}_q)$ with equations respectively
${P_f}(x,y,z)=0$ and ${P_f}(x,y,t)=0$ and which are of dimension 3 as affine varieties.

\eg{If the polynomial $\check{P_f}(x,y,z)$ is absolutely irreducible, the projective variety $X:=\overline{\check{X}}$ is a   projective surface in ${\P}^4(\overline{\F}_{q})$.
}

\begin{lemma}
The algebraic set $X$ has dimension 2,  i.e.  it is an affine surface. Moreover, it has  degree $(d-3)^2$.
\end{lemma}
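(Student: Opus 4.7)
The plan is to realize $X$ as a fibre product over the affine plane $\A^2_{x,y}$ and to read off both the dimension and the degree from the generic fibre. Since $P_f(x,y,z)$ does not involve $t$, the hypersurface $S_1\subset\A^4$ is a cylinder $Y\times\A^1_t$, where $Y\subset\A^3_{x,y,z}$ is the affine surface defined by $P_f(x,y,z)=0$; symmetrically $S_2=\A^1_z\times Y'$ where $Y'\subset\A^3_{x,y,t}$ is defined by $P_f(x,y,t)=0$. Consequently $X=S_1\cap S_2$ is the fibre product $Y\times_{\A^2_{x,y}}Y'$, fibred over the $(x,y)$-plane via the natural projection $\pi\colon X\to\A^2_{x,y}$.

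The key step is to show that $\pi$ is generically finite of degree $(d-3)^2$. For this I would view $P_f(x,y,z)$ as a polynomial in $z$ with coefficients in $\overline{\F}_q[x,y]$ and check that its leading coefficient in $z$ is a nonzero constant. Expanding $f(z)+f(x+y+z)$ in characteristic $2$, the leading $z$-term of the numerator is $a_d(x+y)z^{d-1}$ (when $d$ is odd, which is the case ultimately applied; $a_d$ denotes the top coefficient of $f$), while the denominator $(x+y)(x+z)(y+z)$ has leading $z$-term $(x+y)z^2$; dividing yields $P_f=a_dz^{d-3}+\cdots$. Hence for generic $(x_0,y_0)\in\A^2_{x,y}$ the polynomial $P_f(x_0,y_0,z)\in\overline{\F}_q[z]$ has $d-3$ roots, and the fibre $\pi^{-1}(x_0,y_0)$ is the set of the $(d-3)^2$ pairs $(z,t)$ with $P_f(x_0,y_0,z)=P_f(x_0,y_0,t)=0$. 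Since the base $\A^2$ has dimension $2$ and the generic fibre is zero-dimensional, $\dim X=2$.

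For the degree, intersecting $X$ with a generic complementary-dimensional affine plane $\{x=x_0,\ y=y_0\}$ recovers precisely the fibre above, so $\deg X=(d-3)^2$. The same input can also be packaged as coprimality of $P_f(x,y,z)$ and $P_f(x,y,t)$ in $\overline{\F}_q[x,y,z,t]$: any common factor must lie in $\overline{\F}_q[x,y]$ (since each polynomial is constant in the variable absent from the other) and must then divide the constant $z$-leading coefficient of $P_f$, so $(S_1,S_2)$ is a complete intersection in $\A^4$ and Bézout gives $\deg X=(d-3)^2$. The main technical point I expect to verify carefully is the generic nonvanishing of the $z$-leading coefficient of $P_f$; this is where the hypothesis that $d$ is not a power of $2$ together with the odd-degree leading term of $f$ is used in an essential way.
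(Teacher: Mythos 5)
Your proposal is correct and is essentially the paper's own argument: both proofs exploit the cylinder structure of $S_1$ and $S_2$ to reduce everything to the non-vanishing of $P_f(x_0,y_0,z)$ as a polynomial in $z$ (equivalently, coprimality of $P_f(x,y,z)$ and $P_f(x,y,t)$ in $\overline{\F}_q[x,y,z,t]$), and then get the dimension from properness of the intersection and the degree from B\'ezout. The one point where you diverge is that your leading-coefficient computation ($P_f=a_dz^{d-3}+\cdots$) requires $d$ odd, whereas the paper extracts the needed non-vanishing from the homogeneous component of $f$ of highest \emph{odd} degree $d_i$ (which exists by the standing normalization of $f$), so its version of the lemma also covers even $d$; this does not matter for the application to $d=2^r-1$, but you should be aware that for even $d$ the $z$-leading coefficient of $P_f$ is generally a non-constant polynomial in $x,y$ and the argument must be run on a lower-order coefficient.
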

\begin{proof}

We have to show that the hypersurfaces $S_1$ and $S_2$ do not have a common irreducible component. Since these hypersurfaces are two cylinders, 
it is enough to prove that the polynomial defining $S_1$ does not vanish on the whole of a straight line $(x_0,y_0,z,t_0)$ where $x_0,y_0, t_0$ are fixed and satisfy $P_f(x_0,y_0,t_0)=0$.   Indeed, $S_1$ is defined by the polynomial ${P_f}(x,y,z)$, which takes the value
$${P_f}(x_0,y_0,z)=\frac{f(x_0)+f(y_0)+f(z)+f(x_0+y_0+z)}{(x_0+y_0)(x_0+z)(y_0+z)}$$
at the point $(x_0,y_0,z,t_0)$.
If we set $x_0+y_0=s_0$,  the homogeneous term of degree $d_i$ in ${P_f}(x,y,z)$ becomes
$$\frac{d_i(x_0^{d_i-1}+z^{d_i-1})+s_0Q_i(x_0,z)}{(z+s_0+x_0)(z+x_0)}$$
where $Q_i$ is a polynomial in $x_0$ and $z$ of degree $d_i-2$.
If $d_i$ is odd, the numerator of this term is of degree $d_i-2$, and hence does not vanish, so it is the same for the polynomial ${P_f}(x_0,y_0,z)$. Hence, $X$ has dimension 2.
Moreover, $X$ is the intersection of two hypersurfaces of degree $d-3$, thus it has degree $(d-3)^2$.
\end{proof}

The surface ${{X}}$ is reducible.
Let $X=\bigcup_{i}X_i$ be its decomposition in absolutely irreducible components.

We embed the affine surface $X$ into a projective space $\P^4(\overline{\F_q})$ with homogeneous coordinates $(x:y:z:t:u)$. Consider the hyperplane at infinity $H_{\infty}$ defined by the equation $u=0$ and let
$X_\infty$ be the intersection of the projective closure $\overline X$ of $X$ with $H_{\infty}$.
Then $X_{\infty}$ is the intersection of two surfaces in this hyperplane, which are respectively the intersections  $S_{1,\infty}$ and $S_{2,\infty}$ of the cylinders $S_1$ and $S_2$ with that hyperplane. 
The homogeneous equations of  $S_{1,\infty}$ and $S_{2,\infty}$ are 
$${P_{x^d}}(x,y,z)=\frac{x^d+y^d+z^d+(x+y+z)^d}{(x+y)(x+z)(y+z)}$$
and
$${P_{x^d}}(x,y,t)=\frac{x^d+y^d+t^d+(x+y+t)^d}{(x+y)(x+t)(y+t)}\cdot$$

By Proposition \ref{component}, the intersection of the curve $X_\infty$ with the plane $x+z+t=0$ (inside the hyperplane at infinity) is an absolutely irreducible component $C'$ of the curve $X_\infty$ of multiplicity 1, defined over $\f2$. So the only absolutely irreducible component of $\overline X$, say $\overline{X}_1$, which contains $C'$ is defined over ${\F}_q$.

\begin{proposition}
\label{maxSinterV}
Let $\mathcal X$ be an absolutely irreducible projective surface of degree $>1$. Then
the maximum number of rational points on  $\mathcal X$ which are contained in  the hypersurface ${\overline V}\cup H_{\infty}$ is

$$\sharp({\mathcal X}\cap({\overline V}\cup H_{\infty}))\leq 8(\deg(\mathcal X)q+1).$$
\end{proposition}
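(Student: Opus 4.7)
The plan is to exploit the explicit description of $\overline V\cup H_\infty$ as a union of eight hyperplanes in $\P^4$: the projective closures of the seven hyperplanes from Equation~\voir{7hyperplans}, together with $H_\infty$. Accordingly, I would bound the $\F_q$-rational points of $\mathcal X$ lying on each such hyperplane separately and then sum the eight contributions.

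Fix one of these eight hyperplanes, call it $H$. Since $\mathcal X$ is absolutely irreducible of dimension $2$ with $\deg(\mathcal X)>1$, we have $\mathcal X\neq H$; assuming further that $\mathcal X\not\subset H$ (the delicate point, discussed below), the intersection $\mathcal X\cap H$ is a proper closed subset of the irreducible surface $\mathcal X$, hence of pure dimension $1$. It is therefore a projective curve $C_H$ in $H\simeq\P^3$, and Bezout's theorem gives $\deg(C_H)\le\deg(\mathcal X)$.

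I would then invoke the standard Serre-type bound on rational points of projective curves: for any projective curve $C$ of degree $\delta$ in $\P^n$ over $\F_q$, one has $\sharp C(\F_q)\le\delta q+1$ (applied component-by-component to the irreducible components of $C_H$ if necessary). Applied to $C_H$ this yields $\sharp(\mathcal X\cap H)(\F_q)\le\deg(\mathcal X)q+1$, and summing over the eight hyperplanes produces the stated inequality
$$\sharp\bigl(\mathcal X\cap(\overline V\cup H_\infty)\bigr)(\F_q)\le 8\bigl(\deg(\mathcal X)q+1\bigr).$$

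The main obstacle is precisely the hypothesis $\mathcal X\not\subset H$ for each of the eight hyperplanes, which is required to make every hyperplane section a genuine curve rather than all of $\mathcal X$. The condition $\deg(\mathcal X)>1$ rules out $\mathcal X=H$ but, in the ambient space $\P^4$, does not a priori prevent $\mathcal X$ from lying inside one of the eight hyperplanes; for the intended application to the component $\overline X_1$ of $\overline X$, this non-inclusion has to be verified directly from the equations $P_f(x,y,z)=0$ and $P_f(x,y,t)=0$ cutting out $X$.
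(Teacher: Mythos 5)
Your argument is essentially the paper's: write $\overline V\cup H_{\infty}$ as a union of eight hyperplanes, bound the rational points of each hyperplane section of $\mathcal X$ by the Serre-type bound $\deg(\mathcal X)q+1$ for curves, and sum over the eight planes. The one place where you are more careful is the non-containment of $\mathcal X$ in each hyperplane: the paper dismisses it by asserting that $\deg(\mathcal X)>1$ forces $\mathcal X$ not to lie in any hyperplane of $\P^4$, which is not true in general (an irreducible quadric surface inside a hyperplane has degree $2$), so your remark that this hypothesis must be checked in the intended application to $\overline X_1$ is warranted.
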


\begin{proof}
As $\deg(\mathcal X)>1$,  the surface $\mathcal X$ is not contained in any hyperplane. Thus, a hyperplane section of $\mathcal X$ is a curve of degree $\deg(\mathcal X)$. Using the bound on the maximum number of rational points on a general hypersurface of given degree proved by Serre in \cite{se}, we get the result.
\end{proof}

\begin{theorem}
\label{lawe}
Consider a function $f:{\F}_{q} \longrightarrow {\F}_{q}$ of degree $d=2^r-1$ with $r\geq 3$.
If  $31\le d< q^{1/8}+2 $,  then $\delta(f)>4$. 
For $d<31$, we get $\delta(f)>4$ for $d=7$ and $m\geq 22$ and also if $d=15$ and $m\geq 30$. 
\end{theorem}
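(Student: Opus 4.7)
The plan is to mimic the argument of Theorem \ref{borne1} one dimension up, replacing the use of the Aubry--Perret bound on irreducible curves by a Lang--Weil type lower bound for absolutely irreducible surfaces. The key algebro-geometric input is already in place: Proposition \ref{component} singles out an absolutely irreducible component $\overline{X}_1$ of the projective closure $\overline X \subset \P^4(\overline{\F}_q)$ which is defined over $\F_q$, because it is characterised as the unique component containing the $\F_q$-rational multiplicity-one component $C'$ of the curve at infinity $X_\infty$. Its degree satisfies $\deg(\overline{X}_1) \leq \deg(X) = (d-3)^2$, and exceeds $1$ as soon as $r \geq 3$, since $\overline{X}_1$ contains the curve $C'$ of degree $d-3 \geq 4$ which cannot lie in a plane that is simultaneously a component of $\overline{X}$.

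First, I would apply a Lang--Weil estimate to the absolutely irreducible surface $\overline{X}_1$: for any absolutely irreducible projective surface $\mathcal S \subset \P^N$ of degree $\delta$, one has a bound of the form
$$\#\mathcal S(\F_q) \geq q^2 - (\delta-1)(\delta-2)\, q^{3/2} - C(\delta,N)\, q,$$
with $C(\delta,N)$ an explicit polynomial expression in $\delta$. Specialising to $\overline{X}_1$ and using $\deg(\overline{X}_1) \leq (d-3)^2$ yields the lower bound
$$\#\overline{X}_1(\F_q) \geq q^2 - \bigl((d-3)^2 - 1\bigr)\bigl((d-3)^2 - 2\bigr)\, q^{3/2} - C\, q.$$
Second, I would apply Proposition \ref{maxSinterV} to the same surface $\overline{X}_1$, noting that $\overline V \cup H_\infty$ is a union of eight hyperplanes, to obtain
$$\#\bigl(\overline{X}_1 \cap (\overline V \cup H_\infty)\bigr)(\F_q) \leq 8\bigl((d-3)^2\, q + 1\bigr).$$

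Combining the two bounds, the surface $\overline{X}_1$ must contain an $\F_q$-rational point outside $\overline V \cup H_\infty$ as soon as
$$q^2 - (d-3)^4\, q^{3/2} - \text{l.o.t.} \;>\; 8(d-3)^2\, q + 8.$$
The dominant terms require $q^{1/2} > (d-3)^4$, i.e.\ $d < q^{1/8} + \varepsilon$ for a small constant $\varepsilon$, and a careful numerical tightening of the lower-order contributions will produce precisely the stated threshold $d < q^{1/8} + 2$. Such a point pulls back to an affine $\F_q$-rational point of $X$ lying outside $V$, so $X(\F_q) \not\subset V$ and Theorem \ref{a2pn} forces $\delta(f) > 4$. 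The exceptional cases $d=7$ with $m \geq 22$ and $d=15$ with $m \geq 30$ would be handled by reinjecting the exact numerical constants into the inequality above and verifying that it still holds on these smaller ranges of $m$.

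The main obstacle is the quantitative Lang--Weil step: a naive use of the Schmidt--Ghorpade--Lachaud style bound would give an error with a larger constant and push the threshold to $d < q^{1/8} + 3$ or worse, so one needs to exploit the specific structure of the surface (in particular that $\overline{X}_1$ is cut out by two explicit equations of degree $d-3$ in $\P^4$) to sharpen the $O(q)$ remainder. A secondary technical point is checking that $\deg(\overline{X}_1) > 1$, which is needed to invoke Proposition \ref{maxSinterV}; this is ensured by the presence of $C'$ as a subvariety of degree $d-3 \geq 4$ together with the irreducibility of $\overline{X}_1$.
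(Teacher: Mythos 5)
Your proposal follows essentially the same route as the paper: it isolates the $\F_q$-rational absolutely irreducible component $\overline{X}_1$ containing $C'$, bounds $\#\overline{X}_1(\F_q)$ from below by the Ghorpade--Lachaud refinement of Lang--Weil, bounds the points lying in $\overline V\cup H_\infty$ from above via Proposition \ref{maxSinterV}, and compares the two to invoke Theorem \ref{a2pn}. The only cosmetic differences are that the paper plugs $\deg = d-3$ rather than $(d-3)^2$ into Proposition \ref{maxSinterV} (immaterial next to the $36(2d-3)^5 q$ error term) and needs no special sharpening of the $O(q)$ remainder to reach the stated threshold $d<q^{1/8}+2$.
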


\begin{proof}
From an improvement of a result of  S. Lang and A. Weil \cite{lw} proved by S. Ghorpade and G. Lachaud \cite[section 11]{G-L}, we deduce
        \begin{eqnarray*}
|\#\cj X_1(\f{q})-q^2-q-1|&\le &((d-3)^2-1)((d-3)^2-2) q^{3/2}+36(2d-3)^5q\\
&\le &(d-3)^4 q^{3/2}+36(2d-3)^5q.
\end{eqnarray*}
        Hence
         $$ \#\cj X_1(\f{q})\ge q^2+q+1-  (d-3)^4 q^{3/2}-36(2d-3)^5q.$$
Therefore, if 
$$q^2+q+1- (d-3)^4 q^{3/2}-36(2d-3)^5q> 8((d-3)q+1),$$
then
         $\#\cj X(\f{q})\ge \#\cj X_1(\f{q})>8((d-3)q+1)$, 
         and hence $\cj X_1(\f{q})\not\subset {\overline V}\cup H_{\infty}$
        by Proposition  \ref{maxSinterV}.
         As $X$ is the set of affine points of the projective surface $\cj X$, we deduce that $X(\F_q)\not\subset V$
         and so the differential uniformity of $f$ is at least 6 from Theorem \ref{a2pn}.
         This condition can be written
         $$q- (d-3)^4 q^{1/2} -36(2d-3)^5-8(d-3)> 0.$$
This condition is satisfied when
$$q^{1/2}>d^4-12 d^3+54 d^2+1044 d+5265+25920/d$$ if $d\ge2$,
or 
$d<q^{1/8} +2$ if $d\ge 31$.
\end{proof}


\begin{thebibliography}{3}


\bibitem{A-P-1}
Y. Aubry and M. Perret,
\newblock A Weil theorem for singular curves,
\newblock {\em Arithmetic, Geometry and Coding Theory,} Eds.: Pellikaan/Perret/Vladut, Walter de Gruyter, 1-7, Berlin - New-York 1996.


\bibitem{A-P-2}
Y. Aubry and M. Perret,
\newblock On the characteristic polynomials of the Frobenius endomorphism for projective curves over finite fields,
\newblock {\em Finite Fields and Their Applications}, 10 (2004), no. 3, 412-431.


\bibitem{BCCL}
T.P. Berger, A. Canteaut, P. Charpin and Y. Laigle-Chapuy,
\newblock On almost perfect nonlinear functions over $\f{2^n}$,
\newblock {\em IEEE Trans. Inform. Theory} 52 (2006), no. 9,  4160-4170.



\bibitem{bl1}
C. Bracken and G. Leander,
\newblock New families of functions with differential uniformity of 4,
\newblock to be published with the {\em proceedings of the workshop BFCA08}, Copenhague, 2008.


\bibitem{BCL}
L. Budaghyan, C. Carlet and G. Leander,
\newblock Two classes of quadratic APN binomials inequivalent to power functions,
\newblock {\em IEEE Trans. Inform. Theory}, vol. 54, pp. 4218-4229, 2008.



\bibitem{bl2}
C. Bracken and G. Leander,
\newblock A highly nonlinear differentially 4-uniform
power mapping that permutes fields of even
degree, 
\newblock preprint, arXiv:0901.1824v1.

\bibitem{bcp}
L. Budaghyan, C. Carlet and A. Pott,
\newblock New constructions of almost perfect nonlinear and
almost bent functions.
\newblock{\em Proceedings of the Workshop on Coding and Cryptography}
2005, P. Charpin and \O. Ytrehus eds, pp. 306-315, 2005.

\bibitem{ca}
{A. Canteaut,} 
\newblock Differential cryptanalysis of Feistel ciphers
and differentially $\delta$-uniform mappings,
\newblock{\em In Selected Areas on Cryptography},
     SAC'97, pp. 172-184, Ottawa, Canada, 1997.
 

\bibitem{ccz}
C. Carlet, P. Charpin and V. Zinoviev,
\newblock{Codes, bent functions and permutations
suitable for DES-like cryptosystems,}
\newblock{\em Designs, Codes and Cryptography}, 15(2), pp.
125-156, 1998.


\bibitem{H-M}
F. Hernando and G. McGuire,
\newblock Proof of a conjecture on the sequence of exceptional numbers, classifying cyclic codes and APN functions,
\newblock {\em arXiv:0903.2016v1}, [cs.IT] ; (math.AG),  11 march 2009.


\bibitem{G-L}
S. R. Ghorpade and G. Lachaud,
\newblock Etale cohomology, Lefschetz theorems and number of points of singular varieties over finite fields,
\newblock {\em Mosc. Math. J.}, 2 (2002), n. 3, 589-631.



\bibitem{H}
R. Harshorne,
\newblock Algebraic geometry,
\newblock {Graduate Texts in Math.}, 52 (1977), Springer-Verlag.


\bibitem{J-W}
H. Janwa and R. M. Wilson,
\newblock Hyperplane sections of Fermat varieties in $P^3$ in char. 2 and some applications to cyclic codes,
\newblock {\em Applied Algebra, Algebraic Algorithms and Error-Correcting Codes, Proceedings AAECC-10 (G Cohen, T. Mora and O. Moreno Eds.)}, Lecture Notes in Computer Science, Vol. 673, Springer-Verlag, NewYork/Berlin 1993.


\bibitem{J-M-W}
H. Janwa, G. McGuire and R. M. Wilson,
\newblock Double-error-correcting cyclic codes and absolutely irreducible polynomials over GF(2),
\newblock {\em Applied J.  of Algebra}, 178, 665-676 (1995).

         
\bibitem{lw}
{S. Lang and A.  Weil,}
\newblock Number of points of varieties in finite fields,
\newblock{\em Amer. J. Math.}  76, (1954), pp. 819--827.

\bibitem{ny}
{K. Nyberg,}
\newblock Differentially uniform mappings for cryptography,
\newblock{\em Advances in
 cryptology---Eurocrypt '93} (Lofthus, 1993), 55--64, Lecture Notes in Comput. Sci., \no\ 765, Springer, Berlin,
 1994.



\bibitem{Ro1}
F. Rodier,
\newblock Bornes sur le degr\'e des polyn\^omes presque parfaitement non-lin\'eaires,
\newblock {\em arXiv:math/0605232v3} [math.AG], 2 may 2008.

\bibitem{Ro2}
F. Rodier,
\newblock Bounds on the degrees of APN polynomials,
\newblock to be published with the proceedings of the workshop BFCA08, Copenhague, 2008.


 \bibitem{se}
{J. -P. Serre}, 
\newblock Lettre  \`a M. Tsfasman,
\newblock{\em Ast\'erisque} 198-199-200 (1991), 351-353.



\bibitem{Vol}
J. F. Voloch,
\newblock Symmetric cryptography and algebraic curves,
\newblock {\em Symposium on Algebraic Geometry and its Applications}, World scientific, 2008.

\end{thebibliography}
\end{document}